\newcommand\myshade{85}
\colorlet{mylinkcolor}{violet}
\colorlet{mycitecolor}{YellowOrange}
\colorlet{myurlcolor}{Aquamarine}
\numberwithin{equation}{section}
\newtheorem{theorem}{Theorem}[section]
\newtheorem{lemma}[theorem]{Lemma}
\newtheorem{proposition}[theorem]{Proposition}
\theoremstyle{definition}
\newtheorem{definition}[theorem]{Definition}
\theoremstyle{remark}
\newtheorem{example}[theorem]{Example}
\newcommand{\R}{\mathbb{R}}
\newcommand{\cK}{{\mathcal K}}
\newcommand{\cM}{{\mathcal M}}
\newcommand{\cN}{{\mathcal N}}
\newcommand{\cS}{{\mathcal S}}
\newcommand{\N}{\mathbb N}
\DeclareMathOperator*{\essinf}{ess\, inf}
\numberwithin{equation}{section}
\DeclareMathOperator*{\supp}{supp}
\DeclareMathOperator*{\diam}{diam}
\DeclareSymbolFont{rsfs}{U}{rsfs}{m}{n}
\DeclareSymbolFontAlphabet{\mathscr}{rsfs}
\DeclareMathOperator*{\isom}{Isom}
\DeclareMathOperator*{\fix}{Fix_{\cM}}
\begin{document}

\title{Schr\"odinger equation on Cartan-Hadamard manifolds with oscillating nonlinearities\footnote{The first and third author are supported by
    GNAMPA, project ``Equazioni alle derivate parziali: problemi e
    modelli.''}}

\author{Luigi Appolloni\thanks{Dipartimento di Matematica e
    Applicazioni, Universit\`a degli Studi di Milano Bicocca. Email:
    \href{mailto:l.appolloni1@campus.unimib.it}{l.appolloni1@campus.unimib.it}}
  \and Giovanni Molica Bisci\thanks{Dipartimento di Scienze Pure e
    Applicate, Universit\`{a} di Urbino Carlo Bo. Email:
    \href{mailto:giovanni.molicabisci@uniurb.it}{giovanni.molicabisci@uniurb.it}}
  \and Simone Secchi\thanks{Dipartimento di Matematica e Applicazioni,
    Universit\`a degli Studi di Milano Bicocca. Email:
    \href{mailto:simone.secchi@unimib.it}{simone.secchi@unimib.it}}}

\date{\DTMnow}

\maketitle

\begin{abstract}
	We study the equation $-\Delta_g w+w=\lambda \alpha(\sigma) f(w)$ on a $d$-dimensional homogeneous Cartan-Hadamard Manifold $\cM$ with $d \geq 3$. Without using the theory of topological indices, we prove the existence of infinitely many solutions  for a class of nonlinearities $f$ which have an oscillating behavior either at zero or at infinity.
\end{abstract}

\section{Introduction}
Let $(\cM,g)$ be a $d$-dimensional homogeneous Cartan-Hadamard Manifold with $d \geq 3$. The aim of this paper is to study
\begin{equation*} \label{eqP}
\tag{$P_\lambda$}
\begin{cases}  \displaystyle -\Delta_g w+w=\lambda \alpha(\sigma) f(w) &\hbox{in}\ \cM \\
  w \in H^1_g(\cM)
\end{cases}
\end{equation*}
where $-\Delta_g$ denotes the Laplace-Beltrami operator, $\alpha \in L^1(\cM)  \cap L^\infty (\cM) \setminus \lbrace 0 \rbrace$ is nonnegative, $f :\R \to \R$ is a continuous function and $\lambda >0$ a real parameter.

The stationary nonlinear Schr\"odinger equation is undoubtedly one of the most attractive topics in nonlinear analysis. In the last years many researchers studied this equations under various hypothesis on the non nonlinear term and in different setting. Among them, the study of the nonlinear Schr\"odinger equation on Riemannian manifold has received a particular attention recently. Faraci and Farkas in \cite{MR4020749} using variational methods proved a characterization result for existence of solutions for the Schr\"odinger equation with a divergent potential in a non-compact Riemannian manifold with asymptotically non-negative Ricci curvature. In the same setting of this paper, Molica Bisci and Secchi in \cite{MR3886596} proved some existence and non-existence results for a similar problem, while Appolloni et al. in \cite{https://doi.org/10.48550/arxiv.2203.08482} showed the existence of three critical points for the energy functional associated to a perturbed problem. Krist\'{a}ly in \cite{MR2461832} proved a multiplicity result for the equation without a potential and with $\cM=\mathbb{S}^d$. We also quote \cite{MR4104476} where Molica Bisci and Vilasi obtained an existence result regarding positive solutions which are invariant under the action of a specific family of isometries and \cite{MR4124140} where Molica Bisci and Repov\v{s} showed the existence of positive solutions when the nonlinear term is critical in the sense of Sobolev. It is also worth mentioning \cite{MR4144286} where Cencelj et al. by applying the Palais principle of symmetric criticality and suitable group theoretical arguments are able to prove the existence of nontrivial weak solutions. Motived by the great interest in this field, in this paper we are going to study the Schr\"odinger equation on a non compact homogeneous Cartan-Hadamard manifold with a non linear term $f$ that oscillates near zero or at infinity. As regards oscillating nonlinearities there is a wide literature dealing with this kind of problems with numerous differential operator. To the best of our knowledge, one of the first contribution in this direction was given in \cite{MR1325805} by Habets et al. where the authors exhibit the problem they are considering admits an unbounded sequence of solutions with $d=1$ with a technique based on phase-plane analysis and time-mapping estimates. At a later time, Omari and Zanolin in \cite{MR1391521} were able to show the existence of infinitely many solutions for a problem with a general operator in divergence form building a sequence of arbitrarily large negative lower solutions and a sequence of arbitrarily large positive upper solutions. More recently Anello and Cordaro in \cite{MR1912413} proved the existence of a sequence of critical points converging to zero with respect the $L^\infty$ norm for a problem with a nonlinear oscillating term at zero. In the same spirit of the previous one, Molica Bisci and Pizzimenti obtained in \cite{MR3251760} similar results for the $p$-Kirchhoff problem analysing also what happens in presence of oscillations at infinity. Finally, Molica Bisci and R\u{a}dulescu in \cite{MR4149282} showed the existence of a sequence of invariant solutions tending to zero both in the Sobolev norm and in the $L^\infty$ norm on the Poincaré ball model.
One of the main task we have to face in order to study Problem \ref{eqP} is the loss of compactness of the embedding $H^1_g(\cM) \hookrightarrow L^q(\cM)$ due to the noncompactness of the manifold $\cM$. In order to overcome this difficulty, we will use an embedding result for a Sobolev space which is invariant under the action of a certain group proved by Skrzypczak and Tintarev in \cite{MR3101775} generalizing the well known fact that the embedding $H^1_{r}(\R^d) \hookrightarrow L^q(\R^d)$ is compact for all $q \in (2,2^*)$ for functions invariant under the group of the rotations. Coupling this fact fact with the principle of symmetric criticality proved by Palais in \cite{MR547524} and the continuity of the superposition operator, whose validity is established in \cite{MR546508} for the euclidean case and generalized to manifold in \cite[Proposition 2.5]{zbMATH01447265}, we will consider an auxiliary problem with a truncated nonlinearity and we will show the existence of infinitely many local minima. We emphasize that in dealing with the case of oscillations near zero we will assume no growth condition on the nonlinear term $f$. Our paper is organised as follows. At the end of this section we collect our main results. In section 2 we recall some basic concepts of Riemannian geometry and Sobolev spaces of manifold. In section 3 we prove Theorem \ref{th1} showing the existence of infinitely many critical points for the energy functional associated to \ref{eqP} and with both $L^\infty$ and Sobolev norm going to zero. In section 4 we address the problem of oscillations at infinity proving Theorem \ref{th2}. More precisely, given a group $G$ that acts on $\cM$ we will denote with
\begin{displaymath}
\fix(G):= \left\lbrace \sigma \in \cM \mid \varphi(\sigma)=\sigma \ \mbox{for all} \ \varphi \in G \right\rbrace.
\end{displaymath}
the fixed points of $G$. The following hypothesis will be crucial in the sequel:
\begin{description}
\item[$(\mathcal{H}^{\sigma_0}_{G})$] $G$ is a compact, connected subgroup of the isometries $\isom_g(\cM)$ of $(\cM,g)$ such that
	\begin{displaymath}
		\fix(G)=\left\lbrace \sigma_0\right\rbrace
	\end{displaymath}
for some point $\sigma_0 \in \cM$.
\end{description}

The main results we are going to prove during the rest of the paper are the following.
\begin{theorem} \label{th1}
Assume that $(\mathcal{H}^{\sigma_0}_{G})$ holds and let $\alpha \in L^1(\cM)\cap L^\infty(\cM) \setminus \lbrace 0 \rbrace$ be a nonnegative map such that $\alpha(\sigma)= \alpha(d_g(\sigma_0,\sigma))$. Moreover, let $f\colon \R \to \R$ be a continuous function for which 
\begin{description}
\item[$(f_0)$] there exist two sequences $(t_j)_j$ and $(t_j')_j$ with $\displaystyle\lim_{j \to +\infty} t'_j=0$ and $0 \leq t_j < t'_j$ such that
\begin{displaymath}
F(t_j)= \sup_{t \in \left[t_j,t_j' \right]}F(t),
\end{displaymath}
where $F(t):=\displaystyle\int_0^t f(\tau)\, d\tau;$
\item[$(f_1)$] there exist a constant $K_1>0$ and a sequence $(\xi_j)_j \subset (0,+\infty)$ with $\displaystyle\lim_{j \to + \infty} \xi_j=0$ such that
\begin{displaymath}
	\lim_{j \to +\infty} \frac{F(\xi_j)}{\xi_j^2}=+\infty,
\end{displaymath}
and
\begin{displaymath}
	\inf_{t \in \left[0,\xi_j \right]} F(t) \geq -K_1F(\xi_j).
\end{displaymath}
\end{description}
Then for every $\lambda >0$ it is possible to find a sequence $(w_j)_j \subset H^1_{G}(\cM)$ of nonnegative and not identically zero solutions of \eqref{eqP} such that
\begin{displaymath}
\lim_{j \to +\infty} \Vert w_j \Vert=\lim_{j \to +\infty} \Vert w_j \Vert_{L^{\infty}(\cM)}=0.
\end{displaymath}
\end{theorem}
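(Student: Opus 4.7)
The plan is to carry out a variational truncation argument on the $G$-invariant subspace
$$H^1_G(\cM) := \{ w \in H^1_g(\cM) \mid w\circ \varphi = w \text{ for every } \varphi \in G \}.$$
Under $(\mathcal{H}^{\sigma_0}_G)$, the embedding result of Skrzypczak and Tintarev \cite{MR3101775} yields a compact inclusion $H^1_G(\cM) \hookrightarrow L^q(\cM)$ for every $q \in (2, 2^*)$. Since $\alpha$ is radial about $\sigma_0$ and the elements of $G$ preserve the Riemannian volume measure, the energy functional naturally associated to \eqref{eqP} is $G$-invariant; Palais' principle of symmetric criticality \cite{MR547524} therefore ensures that any critical point of the restriction to $H^1_G(\cM)$ is a weak solution of \eqref{eqP}.

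For each index $j$ I would introduce the truncated nonlinearity $\tilde f_j(t) := f(t)$ for $0 \leq t \leq t'_j$ and $\tilde f_j(t) := 0$ otherwise, with primitive $\tilde F_j(t) := \int_0^t \tilde f_j(\tau)\, d\tau$. The associated functional
$$I_{\lambda,j}(w) := \frac{1}{2}\|w\|^2 - \lambda \int_{\cM}\alpha(\sigma)\, \tilde F_j(w(\sigma))\, d\mu_g$$
is coercive (because $\tilde F_j$ is bounded) and sequentially weakly lower semicontinuous on $H^1_G(\cM)$, combining the compact embedding above with the continuity of the superposition operator in the manifold setting \cite[Proposition 2.5]{zbMATH01447265}. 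Hence $I_{\lambda,j}$ attains a global minimum $w_j$. To see that $0 \leq w_j \leq t_j$ almost everywhere, I would compare $I_{\lambda,j}(w_j)$ with $I_{\lambda,j}(\hat w_j)$ for $\hat w_j := \max\{0, \min\{w_j, t_j\}\}$: the truncation does not increase the Dirichlet norm, and by $(f_0)$ one has $\tilde F_j(\hat w_j) \geq \tilde F_j(w_j)$ pointwise, since $F(t_j)$ dominates $F$ on $[t_j, t'_j]$ while $\tilde F_j$ is constant, equal to $F(t'_j) \leq F(t_j)$, on $[t'_j, +\infty)$. Therefore $\hat w_j$ is itself a minimizer, which one may take as $w_j$; in particular $\tilde f_j(w_j) = f(w_j)$, so that $w_j$ solves \eqref{eqP}.

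Nontriviality of $w_j$ follows from $(f_1)$ by means of a test function $v_j := \xi_j \eta$, where $\eta$ is a fixed $G$-invariant radial cutoff equal to $1$ on $B(\sigma_0, r)$ and supported in $B(\sigma_0, 2r)$, with $r$ chosen large enough that $\int_{B(\sigma_0, r)} \alpha\, d\mu_g > K_1 \int_{B(\sigma_0, 2r) \setminus B(\sigma_0, r)} \alpha\, d\mu_g$ (possible since $\alpha \in L^1(\cM)$). The quadratic term satisfies $\|v_j\|^2 = \xi_j^2 \|\eta\|^2$, while the lower bound $\inf_{[0, \xi_j]} F \geq -K_1 F(\xi_j)$ combined with $F(\xi_j)/\xi_j^2 \to +\infty$ implies that the nonlinear contribution dominates for $j$ large, giving $I_{\lambda,j}(v_j)<0$. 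Consequently $I_{\lambda,j}(w_j) \leq I_{\lambda,j}(v_j) < 0 = I_{\lambda,j}(0)$ and $w_j \not\equiv 0$. Finally, the pointwise bound $\|w_j\|_{L^\infty(\cM)} \leq t_j \to 0$ gives the $L^\infty$-decay, and testing the equation against $w_j$ yields $\|w_j\|^2 \leq \lambda \|\alpha\|_{L^1(\cM)}\, t_j \sup_{[0, t_j]} |f|$, which also tends to zero; since the positive sequence $\|w_j\|_{L^\infty(\cM)}$ converges to zero it must take infinitely many distinct values, producing infinitely many distinct nontrivial solutions.

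The main obstacle I anticipate is the sharp construction and energy estimate of the $G$-invariant test function $v_j$: the cutoff $\eta$ must be radial about $\sigma_0$, compatible with the exponential chart on the Cartan--Hadamard manifold, and its gradient must be estimated uniformly in $j$ so that the quadratic term is genuinely $O(\xi_j^2)$. The lower bound on $F$ encoded in $(f_1)$, and not merely the divergence $F(\xi_j)/\xi_j^2 \to +\infty$, is crucial here: it prevents the transition annulus of $\eta$, where $v_j$ decays from $\xi_j$ down to $0$, from contributing an uncontrolled negative term that would spoil the sign of $I_{\lambda,j}(v_j)$.
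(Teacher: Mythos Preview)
Your approach is correct and in fact streamlines the paper's argument in a genuine way. The paper fixes one continuous truncation $h$ (constant beyond a single level $t_0$), minimizes the truncated energy over the closed convex set $\mathbb{E}_j^{G}=\{w\in H^1_G(\cM):0\le w\le t'_j\}$, and then devotes a substantial step (its Step~3) to showing that this \emph{constrained} minimizer is actually a \emph{local} minimum of $J_{G,\lambda}$ on all of $H^1_G(\cM)$; only then can it invoke criticality. That step uses the Sobolev embedding and the continuity of the superposition operator in a rather delicate way. You bypass this completely by letting the truncation depend on $j$ and minimizing \emph{globally} on $H^1_G(\cM)$: coercivity is free because $\tilde F_j$ is bounded, and the comparison with $\hat w_j=\max\{0,\min\{w_j,t_j\}\}$ (which the paper also performs, as its Step~2) already forces the minimizer into $[0,t_j]$. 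Your test-function estimate is also different: the paper works on annuli and needs $\operatorname{ess\,inf}_{A_a^b}\alpha>0$, whereas your ball-based cutoff with $r$ chosen so that $\int_{B(\sigma_0,r)}\alpha>K_1\int_{B(\sigma_0,2r)\setminus B(\sigma_0,r)}\alpha$ uses only $\alpha\in L^1(\cM)$.

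Two points need care before your sketch becomes a proof. First, your $\tilde f_j$ jumps at $t'_j$ unless $f(t'_j)=0$, so $I_{\lambda,j}$ is in general only Lipschitz, not $C^1$; the passage ``$\tilde f_j(w_j)=f(w_j)$, so $w_j$ solves \eqref{eqP}'' hides the Euler--Lagrange step. You should argue directly that the Gateaux derivative of $w\mapsto\int_{\cM}\alpha\,\tilde F_j(w)\,dv_g$ exists at $\hat w_j$ and equals $\int_{\cM}\alpha\,f(\hat w_j)\varphi\,dv_g$: since $\hat w_j$ takes values in $[0,t_j]\subset[0,t'_j)$ a.e.\ and $\tilde F_j$ is globally Lipschitz (with $f(0)=0$ by the lemma, so no discontinuity at $0$ either), dominated convergence gives this. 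Second, the sequences $(\xi_k)_k$ and $(t'_j)_j$ from $(f_1)$ and $(f_0)$ are independent, so writing $v_j=\xi_j\eta$ is not quite right; for each $j$ you must pick $k=k(j)$ with $\xi_k\le t'_j$ and $F(\xi_k)/\xi_k^2$ large enough, which is possible since $\xi_k\to0$ and $F(\xi_k)/\xi_k^2\to+\infty$.
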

\begin{theorem} \label{th2}
Assume that $(\mathcal{H}^{\sigma_0}_{G})$ holds and let $\alpha \in L^1(\cM)\cap L^\infty(\cM) \setminus \lbrace 0 \rbrace$ be a nonnegative map such that $\alpha(\sigma)= \alpha(d_g(\sigma_0,\sigma))$. Moreover, let $f\colon \R \to \R$ be a continuous function for which 
\begin{description}
\item[$(f_0')$] there are a constant $K_2>0$ and $q \in (2,2^*-1)$ such that
\begin{displaymath}
|f(t)| \leq K_2(1+|t|^q);
\end{displaymath}
\item[$(f_1')$] there are two sequences $(t_j)_j$ and $(t_j')_j$ with $\displaystyle\lim_{j \to +\infty} t_j=+\infty$ and $0 \leq t_j < t_j'$ such that
\begin{displaymath}
F(t_j)= \sup_{t \in \left[t_j,t_j' \right]}F(t);
\end{displaymath}
\item[$(f_2')$] there is a constant $K_3>0$ and a sequence $(\xi_j)_j \subset (0,+\infty)$ with $\displaystyle\lim_{j \to + \infty} \xi_j=\infty$ such that
\begin{displaymath}
\lim_{j \to +\infty} \frac{F(\xi_j)}{\xi_j^2}=+\infty,
\end{displaymath}
and
\begin{displaymath}
\inf_{t \in \left[0,\xi_j \right]} F(t) \geq -K_3 F(\xi_j).
\end{displaymath}
\end{description}
 Then for every $\lambda >0$ it is possible to find a sequence $(w_j)_j \subset H^1_{G}(\cM)$ of nonnegative and not identically zero weak solutions of \eqref{eqP}.
\end{theorem}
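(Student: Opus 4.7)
The plan is to combine truncation with direct minimization in a $G$-invariant Sobolev space, in the same spirit as Theorem \ref{th1}, but adapted to oscillations at infinity: the truncation is now performed at the diverging levels $t_j'$ supplied by $(f_1')$, and the subcritical growth $(f_0')$ replaces the smallness used in the zero-oscillation case. First I would work in the $G$-invariant subspace
\begin{displaymath}
H^1_G(\cM):=\{\,w\in H^1_g(\cM) : w\circ\varphi=w \text{ for every }\varphi\in G\,\},
\end{displaymath}
for which the Skrzypczak--Tintarev result gives the compact embedding $H^1_G(\cM)\hookrightarrow L^q(\cM)$ for every $q\in(2,2^*)$, because $\fix(G)=\{\sigma_0\}$. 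Since $\alpha$ is radial around $\sigma_0$ and hence $G$-invariant, the natural energy functional
\begin{displaymath}
\cE(w)=\tfrac12\|w\|^2-\lambda\int_\cM \alpha(\sigma)\,F(w(\sigma))\,dv_g(\sigma)
\end{displaymath}
is $G$-invariant, so Palais' principle of symmetric criticality will promote its critical points in $H^1_G(\cM)$ to critical points on the whole $H^1_g(\cM)$.

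For each $j$ I would truncate $f$ by setting $\tilde f_j(t):=f(t)$ for $t\in[0,t_j']$ and $\tilde f_j(t):=0$ otherwise, and denote by $\cE_j$ and $\tilde F_j$ the corresponding functional and primitive. The primitive $\tilde F_j$ is bounded for each fixed $j$, so $\cE_j$ is coercive on $H^1_G(\cM)$; combined with the compact embedding, the growth $(f_0')$, and the continuity of the Nemytskii operator (cf.\ \cite[Proposition 2.5]{zbMATH01447265}), one gets that $\cE_j$ is weakly sequentially lower semicontinuous, so it attains a global minimum at some $w_j\in H^1_G(\cM)$, which by Palais' principle is a weak solution on $\cM$ of the truncated equation. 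Testing with $-w_j^-$ yields $w_j\ge 0$. The crucial pointwise upper bound $w_j\le t_j$ then comes from $(f_1')$: writing $\bar w_j:=\min(w_j,t_j)$, one has $\|\bar w_j\|\le\|w_j\|$ always, while the identity $F(t_j)=\sup_{[t_j,t_j']}F$ (together with $\tilde F_j\equiv F(t_j')\le F(t_j)$ on $[t_j',\infty)$) gives $\tilde F_j(\bar w_j)\ge \tilde F_j(w_j)$ pointwise, so minimality of $w_j$ forces $w_j\equiv\bar w_j$. Since $0\le w_j\le t_j\le t_j'$, the truncation is inactive and $w_j$ solves \eqref{eqP}.

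To extract infinitely many pairwise distinct solutions I would use $(f_2')$ via a radial test function. Choose radii $R_1<R_2$ with
\begin{displaymath}
\int_{B_g(\sigma_0,R_1)}\alpha\,dv_g \;>\; K_3\int_{B_g(\sigma_0,R_2)\setminus B_g(\sigma_0,R_1)}\alpha\,dv_g,
\end{displaymath}
which is possible because $R\mapsto\int_{B_g(\sigma_0,R)}\alpha\,dv_g$ is nondecreasing and tends to $\|\alpha\|_{L^1}>0$. Fix a radial Lipschitz cutoff $\eta$ with $\eta\equiv 1$ on $B_g(\sigma_0,R_1)$, $\eta\equiv 0$ outside $B_g(\sigma_0,R_2)$ and $0\le\eta\le 1$; the isometry invariance of the distance makes $\eta\in H^1_G(\cM)$. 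For each $j$ pick $n_j$ with $t_{n_j}\ge\xi_j$ and set $v_j:=\xi_j\eta$; then $0\le v_j\le\xi_j\le t_{n_j}$, and the two inequalities of $(f_2')$ give $\int_\cM\alpha\,\tilde F_{n_j}(v_j)\,dv_g\ge c_0\,F(\xi_j)$ with $c_0>0$, whence
\begin{displaymath}
\cE_{n_j}(v_j)\le\tfrac12\xi_j^2\|\eta\|^2-\lambda c_0 F(\xi_j)\longrightarrow -\infty
\end{displaymath}
by the first condition in $(f_2')$. Since $\cE_{n_j}$ and $\cE$ agree on the slab $\{0\le w\le t_{n_j}'\}$ and $w_{n_j}$ lies there, $\cE(w_{n_j})=\cE_{n_j}(w_{n_j})\le\cE_{n_j}(v_j)\to-\infty$, forcing the $w_{n_j}$ to be pairwise distinct and nontrivial for large $j$. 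The main obstacle is the upgrade from the crude truncation bound $w_j\le t_j'$ to the sharper bound $w_j\le t_j$: the whole use of the oscillation hypothesis $(f_1')$ is packed into this comparison step, and it is what lets the minimizer of the truncated problem solve the original equation.
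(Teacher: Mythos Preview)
Your overall strategy—truncate, minimize globally over $H^1_G(\cM)$, use $(f_1')$ to pin the minimizer into $[0,t_j]$, then plug in a radial test function driven by $(f_2')$ to send the minimal energies to $-\infty$—is sound, and your ball-based test construction with the integral inequality $\int_{B_g(\sigma_0,R_1)}\alpha>K_3\int_{B_g(\sigma_0,R_2)\setminus B_g(\sigma_0,R_1)}\alpha$ is in fact a little cleaner than the paper's annulus argument, which requires locating a region with $\operatorname{essinf}\alpha>0$.

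There is, however, a genuine gap at the step ``global minimizer $w_j$ of $\cE_j$ $\Rightarrow$ weak solution of the truncated equation''. Your truncation $\tilde f_j=f\cdot\chi_{[0,t_j']}$ is discontinuous at $0$ and at $t_j'$ unless $f(0)=f(t_j')=0$, and neither equality follows from the hypotheses. Hence $\cE_j$ need not be $C^1$ (nor even Gâteaux differentiable at points where $\{w=0\}$ has positive measure), so the minimizer is not automatically a critical point and Palais' principle cannot be invoked as written. A simple repair is to truncate \emph{continuously at $t_j$} rather than at $t_j'$: set $\tilde f_j(t)=f(0)$ for $t<0$ and $\tilde f_j(t)=f(t_j)$ for $t>t_j$. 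Condition $(f_1')$ forces $f(t_j)\le 0$, so $\tilde F_j$ is bounded above, $\cE_j$ is coercive and $C^1$, and the comparison $\tilde F_j(\min(w,t_j))\ge\tilde F_j(w)$ persists because $\tilde F_j(t)=F(t_j)+f(t_j)(t-t_j)\le F(t_j)$ for $t>t_j$. (For the lower bound $w_j\ge 0$ you then need $f(0)\ge 0$, which the paper also uses implicitly.)

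The paper proceeds differently and thereby avoids this regularity issue: it truncates only below zero (keeping the nonlinearity continuous) and, instead of a global minimization of a $j$-dependent functional, minimizes the single functional $J_{G,\lambda}$ over the convex constraint set $\mathbb E_j^G=\{0\le w\le t_j'\}$. The price is an additional, nontrivial step (Step~3) showing that the constrained minimizer is a \emph{local} minimum of $J_{G,\lambda}$ in all of $H^1_G(\cM)$; this is precisely where the subcritical growth $(f_0')$ enters, via the Sobolev bound $\|w-v_j^\star\|_{L^{q+1}}\le\tilde\gamma\,\|w-v_j^\star\|$. Your route, once the truncation is made continuous, dispenses with this local-minimum argument entirely—so the two approaches trade a delicate truncation choice against a delicate local-minimality step.
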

\section{Abstract framework}
This section is devoted to recall some basic concepts of Riemannian geometry and to fix the notation. Let $(\cM,g)$ be a $d$-dimensional Riemannian manifold where $g$ is a $(2,0)$ positive definite tensor and $g_{ij}$ are its component. We will denote the tangent space of $\cM$ at a point $ \sigma \in \cM$ with $T_{\sigma} \cM$. We recall that if $f\colon \cM \to \cN$, where $\cN$ is a $d'$--manifold the differential of $Df_\sigma \colon T_\sigma \cM \to T_{G(\sigma)} \cN$ is defined as
\begin{displaymath}
Df_{\sigma}(v)(h):=v(h \circ f )
\end{displaymath}
for all $ h \in C^\infty(\cN)$ and $v \in T_\sigma \cM$. From the notion of differential, if $A$ is a covariant $k$-tensor field of $M$ we can define a covariant $k$-tensor field $G^*A$ on $\cM$ defined as
\begin{displaymath}
(f^*A)_\sigma (v_1,...,v_k)=A_{f(\sigma)}(Df_\sigma(v_1),\ldots,Df_\sigma(v_k))
\end{displaymath}
for $v_1,...,v_k \in T_\sigma \cM$ called the pullback of $A$ by $f$. If $\cN$ is endowed with a metric $\tilde{g}$ we will say that $f$ is an isometry if $f^*\tilde{g}=g$. If $f$ is an isometry it is straightforward to show that it preserves the scalar product, i.e.
\begin{displaymath}
\langle Df_\sigma (v_1),Df_\sigma (v_2) \rangle_{f(\sigma)}= \langle v_1,v_2 \rangle_{\sigma}
\end{displaymath}
for $v_1,v_2 \in T_\sigma \cM $ where $\langle \cdot, \cdot \rangle_\sigma= g_\sigma (\cdot,\cdot)$.
In the following the group of all isometries $\varphi \colon\cM \to \cM$ will be denoted with $\isom_g(\cM)$.
If $\cS \subset \cM$ we can define
\begin{displaymath}
\diam(\cS):= \sup \left\lbrace d_g(\sigma_1,\sigma_2) \mid \sigma_1,\sigma_2 \in \cS \right\rbrace
\end{displaymath}
where $d_g$ is the geodesic distance on $\cM$. Here $H^1_g(\cM)$ denotes the usual Sobolev space defined as the closure of $C^\infty (\cM)$ with respect to the norm
\begin{displaymath}
\Vert w \Vert:= \left( \int_\cM |\nabla_g w(\sigma)|^2\, dv_g+\int_\cM |w(\sigma)|^2\, dv_g \right)^{\frac{1}{2}}
\end{displaymath}
and $\nabla_g w$ is the covariant derivative and
\begin{displaymath}
dv_g:=\sqrt{\det(g)} dx_1 \wedge...\wedge dx_d
\end{displaymath}
is the Riemannian volume form expressed in local coordinates. Once one has defined $dv_g$ it is possible to notice that it induces a measure on $\cM$. Namely, if $\cS \subset \cM$ we have
\begin{displaymath}
\operatorname{Vol}_g(\cS):= \int_\cS \, dv_g.
\end{displaymath}
The Laplace-Beltrami operator is defined in local coordinates by
\begin{displaymath}
\Delta_g h:= \sum_{i,j} \frac{1}{\sqrt{\strut \det g}} \frac{\partial}{\partial x^i} \left( g^{ij}\sqrt{\det g} \frac{\partial h}{\partial x^j} \right).
\end{displaymath}
We point out that we have defined $\Delta_g$ with the ``analyst's sign convention'', so that $-\Delta_g$ coincides with $-\Delta$ in $\mathbb{R}^d$ with its flat metric. The $(1,3)$ Riemann tensor is denoted with $R$ and we also recall that the sectional curvature is defined as
\begin{displaymath}
\cK_\sigma(v_1,v_2):= \frac{\langle R(v_1,v_2)v_1,v_2\rangle_\sigma}{\langle v_1,v_1 \rangle_\sigma \langle v_2,v_2 \rangle_\sigma-\langle v_1,v_2 \rangle_\sigma^2}
\end{displaymath}
for all $v_1, v_2\in T_\sigma \cM$. A Cartan-Hadamard manifold is a Riemannian Manifold that is complete and simply connected and has everywhere non-positive sectional curvature. We also say that a Riemannian manifold $\cM$ is homogeneous if for all $\sigma_1, \ \sigma_2 \in \cM$ there is a isometry $\varphi \in \isom_g(\cM)$ such that $\varphi(\sigma_1)= \sigma_2$. We will assume throughout the paper that the reader is familiar with some basic results on Riemannian geometry and Sobolev spaces on manifolds and we remind to the classical \cite{MR1138207, MR1481970, zbMATH01225960, zbMATH01447265} and \cite{MR2954043} for a deeper insight on these topics.
\begin{definition}
A group $G$ acting continuously on $\cM$ is said to be coercive if for every $t>0$ the set
\begin{displaymath}
	\left\lbrace \sigma \in \cM \mid \diam G\sigma \leq t \right\rbrace
\end{displaymath}
is bounded, where
\[
G\sigma:= \left\lbrace \varphi \cdot \sigma \mid \varphi \in G \right\rbrace.
\]

\end{definition}
As we will see later being coercive will play a determining role to have compact embedding for Sobolev spaces invariant under the action of a group $G$. Despite the coerciveness of a group $G$ has a clear geometrical meaning, it is a property that in most cases turns out to be difficult to verify. In order to overcome this problem, we introduce a condition that is equivalent in a Cartan-Hadamard manifold.

As pointed out in \cite[Proposition 3.1]{MR3101775} in a simply-connected Riemannian manifold with non positive sectional curvature, a subgroup $G$ of $\isom_g(\cM)$ satisfies ($\mathcal{H}^{\sigma_0}_{G}$) if and only if it is coercive. For the sake of completeness we write down here the proposition omitting the proof.
\begin{proposition} \label{prop1}
Let $\cM$ be a simply connected complete Riemannian manifold,
and assume that the sectional curvature is non-positive. Let $G$ be a
compact, connected subgroup of $\isom_g(\cM$) that fixes some point $\sigma_0 \in \cM$. Then $G$ is
coercive if and only if $G$ has no other fixed point but $\sigma_0$.
\end{proposition}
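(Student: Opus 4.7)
The plan is to prove the two implications separately, relying on two classical facts about Cartan--Hadamard manifolds: that the exponential map $\exp_{\sigma_0}\colon T_{\sigma_0}\cM\to\cM$ is a diffeomorphism (so there is a unique geodesic between any two points), and that $\exp_{\sigma_0}$ does not contract distances, i.e.\ $d_g(\exp_{\sigma_0}(rv),\exp_{\sigma_0}(rw))\ge r\|v-w\|$ for unit vectors $v,w\in T_{\sigma_0}\cM$ (a consequence of the Rauch comparison theorem with the flat model). I will also use the standard equivariance identity $\varphi\circ\exp_{\sigma_0}=\exp_{\sigma_0}\circ D\varphi_{\sigma_0}$, valid whenever $\varphi\in\isom_g(\cM)$ fixes $\sigma_0$.

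For the easy direction, suppose $G$ has a fixed point $\sigma_1\neq\sigma_0$. The unique minimizing geodesic $\gamma\colon\R\to\cM$ through $\sigma_0$ and $\sigma_1$ is invariant under every $\varphi\in G$: indeed $\varphi\circ\gamma$ is a geodesic joining the two fixed points $\sigma_0$ and $\sigma_1$, hence coincides with $\gamma$ by uniqueness. Therefore every point of $\gamma(\R)$ is $G$-fixed, so $\diam G\sigma=0$ for each $\sigma\in\gamma(\R)$. Since $\gamma(\R)$ is unbounded, the set $\{\sigma\in\cM\mid\diam G\sigma\le t\}$ is unbounded for every $t>0$, contradicting coerciveness.

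For the harder direction, suppose that $\sigma_0$ is the only fixed point of $G$ but that $G$ is not coercive. Then there exist $t>0$ and a sequence $(\sigma_n)_n\subset\cM$ with $r_n:=d_g(\sigma_0,\sigma_n)\to+\infty$ and $\diam G\sigma_n\le t$. Write $\sigma_n=\exp_{\sigma_0}(r_n v_n)$ with $v_n$ in the unit sphere $S$ of $T_{\sigma_0}\cM$, and extract a subsequence so that $v_n\to v\in S$. For every $\varphi\in G$ the equivariance of the exponential map yields
\begin{equation*}
d_g(\sigma_n,\varphi\sigma_n)=d_g\!\left(\exp_{\sigma_0}(r_n v_n),\exp_{\sigma_0}(r_n D\varphi_{\sigma_0}(v_n))\right)\ge r_n\,\|v_n-D\varphi_{\sigma_0}(v_n)\|,
\end{equation*}
by the Cartan--Hadamard non-contraction estimate. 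Combined with $d_g(\sigma_n,\varphi\sigma_n)\le\diam G\sigma_n\le t$ and $r_n\to+\infty$, this forces $D\varphi_{\sigma_0}(v)=v$ in the limit, for every $\varphi\in G$. Applying $\exp_{\sigma_0}$ to the whole ray $\R_+ v$ produces a geodesic ray emanating from $\sigma_0$ every point of which is $G$-fixed, contradicting the assumption $\fix(G)=\{\sigma_0\}$.

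The two non-routine points are (i) identifying the correct comparison inequality, namely that in non-positive curvature the exponential map at a fixed point is distance-non-decreasing, which is where the Cartan--Hadamard hypothesis is genuinely used; and (ii) the passage from the pointwise limit $D\varphi_{\sigma_0}(v)=v$ to an actual geodesic of fixed points, which hinges on the equivariance $\varphi\circ\exp_{\sigma_0}=\exp_{\sigma_0}\circ D\varphi_{\sigma_0}$. Compactness of $G$ and of $S$ is used only to extract the limiting unit vector $v$; the connectedness hypothesis on $G$ is in fact not needed for the argument sketched above, but it does no harm.
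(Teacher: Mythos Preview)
Your proof is correct. The paper itself does not give a proof of this proposition; it only records the statement ``for the sake of completeness'' and cites \cite[Proposition~3.1]{MR3101775} for the argument, so there is nothing in the paper to compare against. Your approach---linearizing via the equivariance $\varphi\circ\exp_{\sigma_0}=\exp_{\sigma_0}\circ D\varphi_{\sigma_0}$ and invoking the Cartan--Hadamard expansion estimate $d_g(\exp_{\sigma_0}X,\exp_{\sigma_0}Y)\ge\|X-Y\|$ to force $D\varphi_{\sigma_0}(v)=v$---is the natural one and is complete as written. One small correction to your closing remarks: compactness of $G$ is in fact not used anywhere in your argument (what you actually use is compactness of the unit sphere in the finite-dimensional tangent space $T_{\sigma_0}\cM$, which is automatic); as you already note, connectedness of $G$ is likewise unnecessary for this particular statement.
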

There are several example present in literature of homogeneous Cartan-Hadamard manifold with a group acting transitively on it fixing only one point. For instance $\R^d$ equipped with the euclidean metric and the special orthogonal group $SO(d)$ or  $SO(d_1) \times ... \times SO(d_h)$ where $\sum_{i=1}^{d_h} d_i=d$. Another common example is the Poincaré model $\mathbb{H}^d:=\left\lbrace x \in \R^d \ : \ |x| < 1 \right\rbrace$ endowed with the metric $$g_{ij}(x):=\frac{4}{(1-|x|^2)^2}\delta_{ij}$$ with the same choices as above for the group. In addition to that, we can also consider the set $P(d,\R)$ of the symmetric positive definite matrices with determinant equal to one. It turns out that it has a structure of homogeneous Cartan-Hadamard manifold and that the special orthogonal group $O(d)$ acts transitively on it fixing the identity matrix $I_d$. For further detail we suggest the reader to consult \cite[Chapter II.10]{MR1744486},  \cite{MR3490853}, \cite{MR3926120} and \cite[Chapter XXII]{MR1666820}.

Now we fix a point $\sigma_0 \in \cM$ and a group $G$ satisfying ($\mathcal{H}^{\sigma_0}_{G}$). We consider the Sobolev space
\begin{displaymath}
H^1_{G}(\cM)=\left\lbrace w \in H^1_g(\cM) \mid \varphi \circledast w=w \ \mbox{for all} \ \varphi \in G \right\rbrace
\end{displaymath}
where
\begin{displaymath}
\varphi \circledast w:= w(\varphi^{-1}\cdot\sigma) \quad \mbox{for a.e.} \ \sigma \in \cM.
\end{displaymath}

In virtue of the previous remark we are able to state the following compactness result.
\begin{lemma} \label{lemma1}
If $G$ satisfies $(\mathcal{H}^{\sigma_0}_{G})$, then the embedding
\begin{displaymath}
	H^1_{G}(\cM) \hookrightarrow L^{\nu}(\cM)
\end{displaymath}
is compact for all $\nu \in (2,2^*)$ where $2^*:=2d/(d-2)$.
\end{lemma}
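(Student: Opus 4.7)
The plan is to derive this compactness result from Proposition \ref{prop1} combined with the main embedding theorem of Skrzypczak--Tintarev in \cite{MR3101775}. The hypothesis $(\mathcal{H}^{\sigma_0}_{G})$ asks $G$ to be a compact, connected subgroup of $\isom_g(\cM)$ fixing only $\sigma_0$; by Proposition \ref{prop1} this is equivalent to $G$ being coercive, which is precisely the setting in which \cite{MR3101775} produces compact embeddings of $G$-invariant Sobolev spaces into $L^{\nu}$ for $\nu$ in the subcritical range. Since $(\cM,g)$ is a homogeneous Cartan--Hadamard manifold, its sectional curvature is bounded and its injectivity radius is uniformly positive, so the geometric hypotheses required in \cite{MR3101775} are satisfied and we are entitled to invoke that theorem directly.

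For the convenience of the reader I would also sketch the underlying mechanism. Fix a bounded sequence $(w_n)_n \subset H^1_{G}(\cM)$ and, by reflexivity, extract a subsequence with $w_n \weakto w$ in $H^1_{G}(\cM)$. The classical Rellich--Kondrachov theorem applied on each geodesic ball centered at $\sigma_0$ of radius $R$ yields $w_n \to w$ strongly in $L^\nu$ of that ball for every $R>0$ and $\nu \in (2,2^*)$. To promote this to strong convergence on all of $\cM$, one controls the tails outside the ball: since $w_n$ is $G$-invariant, $|w_n(\sigma)|$ is constant along each orbit $G\sigma$, and coerciveness forces $\diam(G\sigma) \to +\infty$ as $\sigma$ leaves compact sets. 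Exploiting the fact that any $H^1_g$ bound forces a lower bound on how thin $G\sigma$ can be (otherwise the $L^2$ mass concentrated on its tubular neighborhood would blow up), one derives an orbit-volume decay estimate in the spirit of Strauss's radial lemma in $\R^d$, allowing us to bound the $L^\nu$ mass on the complement of the ball uniformly in $n$ and then let $R \to +\infty$.

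The main obstacle, already handled in \cite{MR3101775}, is exactly this tail estimate, which on a general Cartan--Hadamard manifold is more delicate than its Euclidean radial prototype: one needs a quantitative lower bound on the growth of $G$-orbit volumes in terms of $d_g(\sigma,\sigma_0)$ and must combine it with a covering argument adapted to the non-Euclidean geometry of $\cM$. Since all of this machinery is already developed in the cited paper, the proof of the present lemma reduces to checking that our $\cM$ and $G$ fall within its scope, which is guaranteed by $(\mathcal{H}^{\sigma_0}_{G})$, Proposition \ref{prop1}, and the homogeneity and non-positive curvature of $\cM$.
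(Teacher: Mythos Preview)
Your proposal is correct and follows essentially the same route as the paper: use Proposition~\ref{prop1} to translate $(\mathcal{H}^{\sigma_0}_{G})$ into coerciveness of $G$ and then invoke the Skrzypczak--Tintarev result \cite{MR3101775}; the paper additionally records the continuity and cocompactness of the embedding via \cite{zbMATH01447265,MR365424,MR2294665} before applying \cite[Theorem~1.3]{MR3101775}, but the overall strategy is the same. Your supplementary sketch of the tail-estimate mechanism is accurate and adds useful intuition that the paper does not spell out.
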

\begin{proof}
According to \cite[Lemma 8.1 and Theorem 8.3]{zbMATH01447265} or \cite{MR365424} the embedding $H^1_{G}(\cM) \hookrightarrow L^{\nu}(\cM)$ is continuous for all $\nu \in \left[2,2^*\right]$ and cocompact for \cite[Chapter 9]{MR2294665}. At this point, taking into account Proposition \ref{prop1} we can apply
\cite[Theorem 1.3]{MR3101775} to complete the proof.
\end{proof}

\section{Oscillation at the origin}
In this section we investigate the existence of solutions for problem \eqref{eqP}
\begin{equation*} \label{eqP0}
\begin{cases}  \displaystyle -\Delta_g w+w=\lambda \alpha(\sigma) f(w) &\hbox{in}\ \cM \\
  w \in H^1_g(\cM)
\end{cases}
\end{equation*}
where $f$ represents a continuous function that oscillates near $0$. More precisely, since now till the end of the section the function $f$ satisfies hypothesis $(f_0)$ and $(f_1)$ of Theorem \ref{th1}.
As an immediate consequence of these hypothesis we have the following Lemma.
\begin{lemma} \label{lemma2}
If $f\colon \R \to \R$ is continuous and satisfies $(f_0)$ and $(f_1)$, then $f(0)=0$.
\end{lemma}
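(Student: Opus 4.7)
The plan is to argue by contradiction and eliminate the two possible sign alternatives for $f(0)$ separately, using $(f_0)$ to rule out $f(0)>0$ and $(f_1)$ to rule out $f(0)<0$.

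First I would suppose $f(0) > 0$. By continuity of $f$ there exists $\delta > 0$ such that $f(t) > f(0)/2 > 0$ on $[0,\delta]$. Hence $F(t) = \int_0^t f(\tau)\,d\tau$ is strictly increasing on $[0,\delta]$. Since $(f_0)$ provides $t_j' \to 0$ with $0 \leq t_j < t_j'$, for all sufficiently large $j$ we have $[t_j, t_j'] \subset [0,\delta]$, so strict monotonicity forces $F(t_j') > F(t_j)$. This contradicts $F(t_j) = \sup_{t \in [t_j, t_j']} F(t)$ required by $(f_0)$.

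Next I would suppose $f(0) < 0$. Again by continuity, $f(t) < f(0)/2 < 0$ on some interval $[0,\delta]$, so $F$ is strictly decreasing there, giving $F(t) < 0$ for every $t \in (0,\delta]$. Now using the sequence $\xi_j \to 0^+$ from $(f_1)$, for $j$ large enough $\xi_j \in (0,\delta]$, so $F(\xi_j) < 0$ and therefore $F(\xi_j)/\xi_j^2 < 0$. This contradicts the assumption $\lim_{j \to +\infty} F(\xi_j)/\xi_j^2 = +\infty$ from $(f_1)$.

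Since neither $f(0) > 0$ nor $f(0) < 0$ is possible, we conclude $f(0) = 0$. I do not anticipate any serious obstacle in this argument; the only point deserving a little care is checking that the interval $[t_j, t_j']$ eventually sits inside the neighborhood on which $f$ has a definite sign, which follows immediately from $t_j' \to 0$ and $0 \leq t_j < t_j'$.
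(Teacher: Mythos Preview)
Your proof is correct and follows essentially the same approach as the paper: rule out $f(0)>0$ via $(f_0)$ and $f(0)<0$ via $(f_1)$. The only cosmetic difference is that the paper obtains $f(0)\leq 0$ by observing directly that $f(t_j)=\lim_{h\to 0^+}\frac{F(t_j+h)-F(t_j)}{h}\leq 0$ and passing to the limit, whereas you phrase it as a contradiction through the strict monotonicity of $F$ near the origin; the second half of your argument is identical to the paper's.
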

\begin{proof}
We first notice that
\begin{displaymath}
f(t_j)=\lim_{h \to 0^+}\frac{\displaystyle\int_{t_j}^{t_j+h} f(\tau)\, d\tau}{h}=\lim_{h \to 0^+}\frac{F(t_j+h)-F(t_j)}{h} \leq 0
\end{displaymath}
by ($f_0$). Thus, exploiting the continuity of $f$ we have \begin{displaymath}
	f(0)=\lim_{j \to + \infty}f(t_j)\leq 0.
\end{displaymath}
On the other hand, suppose by contradiction that $f(0)<0$. Then, again the continuity of $f$ implies that $f(t)<0$ for all $t \in \left[0,\delta \right)$ for some $\delta>0$. Then, we would have
\begin{displaymath}
\lim_{j \to + \infty} \frac{F(\xi_j)}{\xi_j^2} \leq 0,
\end{displaymath}
in contradiction with $(f_1)$.
\end{proof}

The relation $\alpha(\sigma)= \alpha(d_g(\sigma_0,\sigma))$ is a symmetry condition which replaces the radial symmetry of $\alpha$ is $\mathbb{R}^d$.
\begin{proof}[Proof of Theorem \ref{th1}]
Let $\lambda>0$. Since $t_j \to 0$ and $\xi_j \to 0$ as $j \to +\infty$, we may assume that $0 \leq t_j \leq t_0$ and $0 \leq \xi_j \leq t_0$ for some $t_0>0$ and for every $j$. Let $\kappa = \max \left\lbrace \vert f(t) \vert \mid t \in [0,t_0] \right\rbrace$.
In view of Lemma \ref{lemma2}, we define the continuous truncated function
\begin{displaymath}
h(t):=
\begin{cases}
f(t_0) & \hbox{if $t >t_0$} \\
f(t) & \hbox{if $0 \leq t \leq t_0$} \\
0 & \hbox{if $t<0$}
\end{cases}
\end{displaymath}
and we consider the auxiliary problem
\begin{equation*} \label{eqP2}
\tag{$P_0$}
\begin{cases}  \displaystyle -\Delta_g w+w=\lambda \alpha(\sigma) h(w) &\hbox{in}\ \cM \\
  w \in H^1_{G}(\cM)
\end{cases}
\end{equation*}
We also set the the energy functional associated to Problem \eqref{eqP2}
\begin{displaymath}
J_{G,\lambda}(w):= \frac{1}{2}\Vert w \Vert^2-\lambda \int_\cM \alpha(\sigma)\left(\int_0^{w(\sigma)}h(\tau)\, d\tau\right)\, dv_g
\end{displaymath}
and we emphasize that $J_{G,\lambda} \in C^1(H^1_{G}(\cM),\R)$ thanks to Lemma \ref{lemma1} and that is is sequentially lower semicontinuous. Now, for all $j \in \N$ we define the set
\begin{displaymath}
\mathbb{E}_j^{G}:= \left\lbrace w \in H^1_{G}(\cM) \mid 0 \leq w(\sigma) \leq t_j' \ \mbox{a.e in} \ \cM \right\rbrace.
\end{displaymath}
We divide the remaining part of the proof in 6 steps.

\textbf{Step 1:} the functional $J_{G,\lambda}$ in bounded from below on $\mathbb{E}_j^{G}$ and attains its infimum on $\mathbb{E}_j^{G}$ at a function $u_j^{G} \in \mathbb{E}_j^{G}$.
Clearly for all $w \in \mathbb{E}_j^{G}$
\begin{align*}
\int_\cM \alpha(\sigma)\left(\int_0^{w(\sigma)}h(\tau)\, d\tau\right)\, dv_g &\leq  \int_\cM \alpha(\sigma)\left|\int_0^{w(\sigma)}h(\tau)\, d\tau\right|\, dv_g \\
& \leq \kappa \int_\cM \alpha(\sigma) w(\sigma) \, dv_g \leq \kappa \Vert \alpha \Vert_{L^1{(\cM)}} t_j'
\end{align*}
and so
\begin{equation} \label{eq9}
J_{G,\lambda}(w) \geq -\kappa \Vert \alpha \Vert_{L^1{(\cM)}} t_j'.
\end{equation}
At this point set
\begin{displaymath}
\iota_j^{G}:=\inf_{w \in \mathbb{E}_j^{G}} J_{G,\lambda}(w).
\end{displaymath}
From the definition of infimum, for all $k \in \N$ we can find $w_k \in \mathbb{E}_j^{G}$ such that
\begin{displaymath}
\iota_j^{G} \leq J_{G,\lambda}(w_k) \leq \iota_j^{G} + \frac{1}{k}.
\end{displaymath}
From this it follows
\begin{align*}
\Vert w_k \Vert^2 &=J_{G,\lambda}(w_k) + \lambda \int_\cM \alpha(\sigma)\left(\int_0^{w_k(\sigma)}h(\tau)\, d\tau\right)\, dv_g \\
& \leq \kappa \Vert \alpha \Vert_{L^1({\cM})} t_j' + \iota_j^{G} +1
\end{align*}
which implies $(w_k)_k$ must be bounded in $H^1_{G}(\cM)$. Then, up to a subsequence, we can assume $w_k \rightharpoonup u_j^{G}$ for some $u_j^{G} \in H^1_{G}(\cM)$. In order to prove that $u_j^{G} \in \mathbb{E}_j^{G}$ it sufficient to notice that the set $\mathbb{E}_j^{G}$ is closed and convex, thus weakly closed. Now, exploiting the sequentially lower semicontinuity of $J_{G,\lambda}$ we get
\begin{displaymath}
\iota_j^{G} \leq J_{G,\lambda} (u_j^{G}) \leq \liminf_{k \to \infty} J_{G,\lambda} (w_k) \leq \iota_j^{G}
\end{displaymath}
hence
\begin{displaymath}
\iota_j^{G}=J_{G,\lambda} (u_j^{G}).
\end{displaymath}
\textbf{Step 2:} for all $j \in \N$ one has that $0 \leq u_j^{G}(\sigma) \leq t_j$ a.e. in $\cM$.

In order to show that, we set the Lipschitz continuous function $\varrho_j\colon \R \to \R$
\begin{displaymath}
\varrho_j(t):=
\begin{cases}
t_j & \mbox{if $t > t_j$}\\
t & \mbox{if $0 \leq t \leq t_j$}\\
0 & \mbox{if $t < 0$}
\end{cases}
\end{displaymath}
we can consider the superposition operator $T_j \colon H^1_g(\cM) \to H^1_g(\cM)$ defined as
\begin{displaymath}
T_jw(\sigma):=\varrho_j(w(\sigma)) \quad \mbox{a.e. in} \ \cM.
\end{displaymath}
From \cite[Proposition 2.5]{zbMATH01447265} it follows that $T_j$ is a continuous operator. Furthermore, if we restrict $T_j$ to the $G$-invariant functions we have $T_j\colon H^1_{G}(\cM) \to H^1_{G}(\cM)$. In fact, one can readily see that
\begin{align*}
\varphi\circledast T_jw(\sigma) & = T_jw(\varphi^{-1}\cdot \sigma)=(\varrho_j \circ w)(\varphi^{-1}\cdot \sigma) \\
& = \varrho_j(w(\varphi^{-1}\cdot \sigma))=\varrho_j(w(\sigma))= (\varrho_j \circ w)(\sigma)\\
&=T_jw(\sigma) \quad \mbox{a.e. in} \ \cM
\end{align*}
for all $w \in H^1_{G}(\cM)$ and $\varphi\in G$. Actually from its definition  it is clear that $T_jw \in \mathbb{E}_j^{G}$ for all $j \in \N$. At this point we set $v_{G,j}^{\star}:= T_j u_j^{G}$ and
\begin{displaymath}
X_j^{G}:= \left\lbrace \sigma \in \cM \ : \ t_j < u_j^{G}(\sigma) \leq t_j' \ \right\rbrace.
\end{displaymath}
Observe that for all $ \sigma \in X_j^{G}$ one has
\begin{displaymath}
v_{G,j}^{\star}(\sigma)=T_ju_j^{G}(\sigma)=t_j.
\end{displaymath}
Now, exploiting $(f_0)$ we get
\begin{equation*}
\int_0^{u_j^{G}(\sigma)} h(\tau)\, d\tau \leq \sup_{t \in \left[ t_j,t_j' \right]}\int_0^t h(\tau)\, d\tau =\int_0^{t_j} h(\tau)\, d\tau=\int_0^{v_{G,j}^{\star}(\sigma)} h(\tau)\, d\tau
\end{equation*}
thus
\begin{equation} \label{eq1}
\int^{v_{G,j}^{\star}(\sigma)}_{u_j^{G}(\sigma)} h(\tau)\,d\tau \geq 0
\end{equation}
for all $\sigma \in X_j^{G}$. Moreover, taking into account the fact that $|\nabla_g v_{G,j}^{\star} (\sigma)|=0$ a.e. in $X_j^{G}$, we obtain
\begin{align} \label{eq2}
\Vert v_{G,j}^{\star} \Vert^2-\Vert u_j^{G} \Vert^2&= \int_\cM \left(|\nabla_g v_{G,j}^{\star}(\sigma)|^2 - |\nabla_g u_j^{G}(\sigma)|^2\right) \, dv_g\notag \\
&{}\quad +\int_\cM \left(|v_{G,j}^{\star}(\sigma)|^2-|u_j^{G}(\sigma)|^2\right) \, dv_g \notag \\
& = -\int_{X_j^{G}} |\nabla_g u_j^{G}(\sigma)|^2 \, dv_g+\int_{X_j^{G}} \left(t_j^2-|u_j^{G}(\sigma)|^2\right) \, dv_g \\
& \leq -\int_{X_j^{G}} |\nabla_g v_{G,j}^{\star} (\sigma)-\nabla_g u_j^{G}(\sigma)|^2 \, dv_g-\int_{X_j^{G}} \left|u_j^{G}(\sigma)-t_j\right|^2 \, dv_g\notag \\
& = -\int_{\cM} |\nabla_g v_{G,j}^{\star} (\sigma)-\nabla_g u_j^{G}(\sigma)|^2 \, dv_g-\int_{\cM} \left|u_j^{G}(\sigma)-v_{G,j}^{\star}(\sigma)\right|^2 \, dv_g \notag \\
&= -\Vert v_{G,j}^{\star}- u_j^{G} \Vert^2 \notag.
\end{align}
At this point, in virtue of \eqref{eq1} and \eqref{eq2}, recalling $v_{G,j}^{\star} \in \mathbb{E}_j^{G}$ we have
\begin{align*}
0 & \leq J_{G,\lambda}(v_{G,j}^{\star})-J_{G,\lambda}(u_j^{G})= \frac{\Vert v_{G,j}^{\star} \Vert^2-\Vert u_j^{G} \Vert^2}{2} -\lambda \int_{\cM} \alpha(\sigma)\left( \int^{v_{G,j}^{\star}(\sigma)}_{u_j^{G}(\sigma)} h(\tau)\,d\tau\right)\, dv_g \\
& \leq -\frac{1}{2}\Vert v_{G,j}^{\star} - u_j^{G} \Vert^2 -\lambda \int_{X_j^{G}} \alpha(\sigma)\left( \int^{v_{G,j}^{\star}(\sigma)}_{u_j^{G}(\sigma)} h(\tau)\,d\tau\right)\, dv_g \\
& \leq -\frac{1}{2}\Vert v_{G,j}^{\star} - u_j^{G} \Vert^2.
\end{align*}
From this, we can deduce
\begin{displaymath}
\Vert v_{G,j}^{\star} - u_j^{G} \Vert^2=0.
\end{displaymath}
Since $v_{G,j}^\star \neq u_j^G$ except on $X_j^G$, we deduce that
$\operatorname{Vol}_g(X_j^{G})=0$ as desired.

\textbf{Step 3:} the function $u_j^{G}$ is a local minimum for $J_{G,\lambda}$ in the Sobolev space $H^1_{G}(\cM)$ for all $j \in \N$.

In order to do that, we select $ w \in H^1_{G}(\cM)$ and we set
\begin{displaymath}
Z_j^{G}:=\left\lbrace \sigma \in \cM \ : \ w(\sigma) \notin \left[0,t_j \right] \right\rbrace
\end{displaymath}
for every $j \in \N$. Recalling the superposition operator defined in step 2 we set
\begin{displaymath}
v_j^\star(\sigma):=T_j w(\sigma)=
\begin{cases}
t_j & \mbox{if $w(\sigma) > t_j$} \\
w(\sigma) & \mbox{if $0 \leq w(\sigma) \leq t_j$} \\
0 & \mbox{if $w(\sigma) < 0$}
\end{cases}
\end{displaymath}
Now, on one hand one can easily see that
\begin{displaymath}
\int_{v_j^\star(\sigma)}^{w(\sigma)}h(\tau)\, d\tau=0
\end{displaymath}
for every $\sigma \in \cM \setminus Z_j^{G}$. On the other hand, if $ \sigma \in Z_j^{G}$ only three alternative can occur
\begin{enumerate}
\item If $w(\sigma) \leq 0$ it is immediate to see
\begin{displaymath}
\int_{v_j^\star(\sigma)}^{w(\sigma)}h(\tau)\, d\tau=\int_{0}^{w(\sigma)}h(\tau)\,d\tau=0.
\end{displaymath}
\item If $t_j<w(\sigma)\leq t_j'$ we have
\begin{align*}
\int_{v_j^\star(\sigma)}^{w(\sigma)}h(\tau)\,d\tau &= \int_{0}^{w(\sigma)}h(\tau)\,d\tau-\int^{v_j^\star(\sigma)}_{0}h(\tau)\,d\tau \\
& = \int_{0}^{w(\sigma)}h(\tau)\,d\tau-\int^{t_j}_{0}h(\tau)\,d\tau \\
& \leq  \int_{0}^{w(\sigma)}h(\tau)\,d\tau-\sup_{t \in \left[t_j,t_j' \right]}\int^{t}_{0}h(\tau)\,d\tau \leq 0.
\end{align*}
\item If $w(\sigma)>t_j'$ we obtain
\begin{align} \label{eq4}
\int_{v_j^\star(\sigma)}^{w(\sigma)}h(\tau)\,d\tau & =\int_{t_j}^{w(\sigma)}h(\tau)\,d\tau \\ \notag
& \leq \left| \int_{t_j}^{w(\sigma)}h(\tau)\,d\tau \right| \leq \kappa (w(\sigma)-t_j)
\end{align}
At this point set
\begin{displaymath}
C:=\kappa\Vert \alpha \Vert_{L^{\infty}(\cM)} \sup_{t \geq t_j'}\frac{t-t_j}{(t-t_j)^{\nu}}
\end{displaymath}
where $\nu \in (2,2^*)$. From this and \eqref{eq4} we have
\begin{align} \label{eq5}
\int_\cM \alpha(\sigma) \left(\int_{v_j^\star(\sigma)}^{w(\sigma)}h(\tau)\,d\tau \right) \, dv_g & \leq \Vert \alpha \Vert_{L^{\infty}(\cM)} \int_\cM \left(\int_{v_j^\star(\sigma)}^{w(\sigma)}h(\tau)\, d\tau \right) \, dv_g \\\nonumber
& \leq C \int_\cM (w(\sigma)-t_j)^\nu\, dv_g.
\end{align}
Denote
\begin{displaymath}
\gamma:= \sup_{w \in H^1_{G}(\cM)\setminus \left\lbrace 0 \right\rbrace} \frac{\Vert w \Vert_{L^{\nu}(\cM)}}{\Vert w \Vert}
\end{displaymath}
and observe that is finite for Lemma \ref{lemma1}, from \eqref{eq5} we deduce
\begin{equation} \label{eq7}
\int_\cM \alpha(\sigma) \left(\int_{v_j^\star(\sigma)}^{w(\sigma)}h(\tau)\,d\tau \right) \, dv_g \leq C \gamma^\nu \Vert w-v_j^\star\Vert^\nu.
\end{equation}
\end{enumerate}
Now, we compute
\begin{align}\label{eq6}
\Vert w \Vert^2-\Vert v_{j}^{\star} \Vert^2&= \int_\cM \left(|\nabla_g w(\sigma)|^2 - |\nabla_g v_{j}^{\star}(\sigma)|^2\right) \, dv_g+\int_\cM \left(|w(\sigma)|^2-|v_{j}^{\star}|^2\right) \, dv_g \notag \\
& =\int_{Z_j^{G}} |\nabla_g w(\sigma)|^2  \, dv_g+\int_{Z_j^{G,-}} |w(\sigma)|^2 \, dv_g +\int_{Z_j^{G,+}} |w(\sigma)-t_j|^2 \, dv_g \notag \\
&=\int_{Z_j^{G}} |\nabla_g w(\sigma)-\nabla_g v_{j}^{\star}(\sigma)|^2  \, dv_g+\int_{Z_j^{G,-}} |w(\sigma)- v_{j}^{\star}(\sigma)|^2 \, dv_g  \\
& \quad +\int_{Z_j^{G,+}} |w(\sigma)-t_j|^2 \, dv_g \notag \\
&=\Vert w-v_j^{\star} \Vert^2 \notag
\end{align}
where
\begin{displaymath}
Z_j^{G,+}:= \left\lbrace \sigma \in Z_j^{G} \ : \ w(\sigma) >0\right\rbrace \quad \mbox{and} \quad Z_j^{G,-}:=\left\lbrace \sigma \in Z_j^{G} \ : \ w(\sigma) <0\right\rbrace
\end{displaymath}
Coupling \eqref{eq7} and \eqref{eq6} we get
\begin{align*}
 J_{G,\lambda}(w)-J_{G,\lambda}(v_j^\star)&= \frac{\Vert w \Vert^2-\Vert v_{j}^{\star} \Vert^2}{2}-\lambda \int_\cM \alpha(\sigma) \left(\int_{v_j^\star(\sigma)}^{w(\sigma)}h(\tau)\,d\tau \right) \, dv_g \\
& \geq \frac{1}{2}\Vert w-v_j^{\star} \Vert^2-\lambda C \gamma^\nu \Vert w-v_j^\star\Vert^\nu.
\end{align*}
In view oh that, recalling $J_{G,\lambda}(v_j^\star) \geq J_{G,\lambda}(u_j^{G})$ since $ v_j^\star \in \mathbb{E}_j^{G}$, we obtain
\begin{equation} \label{eq8}
J_{G,\lambda}(w) \geq J_{G,\lambda}(u_j^{G})+ \Vert w-v_j^{\star} \Vert^2 \left(\frac{1}{2}-\lambda C \gamma^\nu \Vert w-v_j^{\star} \Vert^{\nu-2} \right)
\end{equation}
At this point, we notice that
\begin{displaymath}
\Vert w-v_j^\star \Vert \leq \Vert w-u_j^{G} \Vert + \Vert u_j^{G}-v_j^\star \Vert = \Vert w-u_j^{G} \Vert + \Vert T_j(u_j^{G}-w) \Vert
\end{displaymath}
thus, exploiting the continuity of the superposition operator, it is possible to find a $\delta >0$ such that
\begin{displaymath}
\Vert w-v_j^\star \Vert^{\nu-2} \leq \frac{1}{4 \lambda C \gamma^\nu}
\end{displaymath}
if $\Vert w-u_j^{G} \Vert \leq \delta$.
Hence, from \eqref{eq8} we get
\begin{displaymath}
J_{G,\lambda}(w) \geq J_{G,\lambda}(u_j^{G})
\end{displaymath}
that means $u_j^{G}$ is a local minimizer.

\textbf{Step 4:} If
\begin{displaymath}
\iota_j^{G}:=\inf_{w \in \mathbb{E}_j^{G}} J_{G,\lambda}(w).
\end{displaymath}
then
\begin{displaymath}
\lim_{j \to \infty}\iota_j^{G}=\lim_{j \to \infty}\Vert u_j^{G}\Vert=0
\end{displaymath}
Recalling that $u_j^{G} \in \mathbb{E}_j^{G}$ and that $\iota_j^{G}=J_{G,\lambda}(u_j^{G})$ we have
\begin{align} \label{eq10}
\int_\cM |\nabla_g u_j^{G} (\sigma)|^2\, dv_g+\int_\cM |u_j^{G}(\sigma)|^2\, dv_g& = J_{G,\lambda}(u_j^{G}) + \lambda \int_\cM \alpha(\sigma)\left(\int_0^{u_j^{G}(\sigma)} h(\tau)\, d\tau \right)\,dv_g \notag \\
&=\iota_j^{G} +\lambda \int_\cM \alpha(\sigma)\left(\int_0^{u_j^{G}(\sigma)} h(\tau)\, d\tau \right)\,dv_g \\
& \leq \iota_j^{G} + \lambda \kappa \Vert \alpha \Vert_{L^1{(\cM)}} t_j' \notag
\end{align}
At this point, we notice that the function $w_0=0$ belongs to $\mathbb{E}_j^{G}$ and so
\begin{displaymath}
\iota_j^{G} = \inf_{w \in \mathbb{E}_j^{G}} J_{G,\lambda}(w) \leq 0.
\end{displaymath}
From this and \eqref{eq10} we can deduce
\begin{displaymath}
\lim_{j \to \infty}\Vert u_j^{G}\Vert=0
\end{displaymath}
since $t_j' \to 0$ as $j \to \infty$. Furthermore, recalling \eqref{eq9} we obtain
\begin{displaymath}
-\kappa \Vert \alpha \Vert_{L^1{(\cM)}} t_j' \leq \iota_j^{G} \leq 0
\end{displaymath}
from which is clear that
\begin{displaymath}
\lim_{j \to \infty} \iota_j^{G}=0.
\end{displaymath}

\textbf{Step 5:} for all $ j \in \N$ we have
\begin{displaymath}
\iota_j^{G} <0.
\end{displaymath}
In order to do that, we select $j \in \N$ and $0<a<b$ such that
\begin{equation} \label{eq11}
\essinf_{\sigma \in A_a^b} \alpha(\sigma)\geq \alpha_0 >0
\end{equation}
where
\begin{displaymath}
A_a^b=B_{\sigma_0}(a+b)\setminus B_{\sigma_0}(b-a)
\end{displaymath}
and, after fixing $\varepsilon \in (0,1)$ we define the function
\begin{displaymath}
\vartheta_{a,b}^{\varepsilon}(\sigma):=
\begin{cases}
0 & \mbox{if $\sigma \in \cM \setminus A_a^b$} \\
1 & \mbox{if $\sigma \in A_{\varepsilon a}^b$} \\
\dfrac{a-|d_g(\sigma_0,\sigma)-b|}{(1-\varepsilon)a} & \mbox{if $\sigma \in A_a^b \setminus A_{\varepsilon a}^b$}.
\end{cases}
\end{displaymath}
It is straightforward to verify that $ \vartheta_{a,b}^{\varepsilon} \in H^1_{G}(\cM)$ since in each point its value depends only on the distance from $\sigma_0$. Moreover, one can easily verifies that
$\supp(\vartheta_{a,b}^{\varepsilon}) \subset A_a^b$ and $  \Vert \vartheta_{a,b}^{\varepsilon} \Vert_{L^\infty(\cM)} \leq 1$. At this point we define the map $\mu_g \colon (0,1) \to \R$ a
where
\begin{displaymath}
\mu_g (\varepsilon)= \frac{\operatorname{Vol}_g(A_{\varepsilon a}^b)}{\operatorname{Vol}_g(A_a^b \setminus A_{\varepsilon a}^b)}
\end{displaymath}
and we notice that
\begin{displaymath}
\lim_{\varepsilon \to 0^+} \mu_g(\varepsilon)=0, \quad \quad \quad \lim_{\varepsilon \to 1^-} \mu_g(\varepsilon)=+\infty.
\end{displaymath}
In view of that, it is possible to find $\varepsilon_0 \in (0,1)$ such that
\begin{displaymath}
\frac{\operatorname{Vol}_g(A_{\varepsilon_0 a}^b)}{\operatorname{Vol}_g(A_a^b \setminus A_{\varepsilon_0 a}^b)}=K_1+1
\end{displaymath}
where $K_1>0$ is the constant given in hypothesis $(f_1)$. From $(f_1)$ we also have the existence of an index $k_0$, with $\xi_{k_0} \leq t_j'$ such that for every $k \geq k_0$
\begin{displaymath}
\frac{\displaystyle\int_0^{\xi_k}h(\tau)\, d\tau}{\xi_k^2} > \frac{1}{2\lambda} \frac{(K+1)\Vert  \vartheta_{a,b}^{\varepsilon_0}\Vert^2}{\alpha_0 \operatorname{Vol}_g(A_{\varepsilon_0a}^b)}.
\end{displaymath}
From this, $(f_1)$ and \eqref{eq11} it follows
\allowdisplaybreaks
\begin{multline*}
\frac{\displaystyle\int_{A_a^b} \alpha(\sigma)\left(\int_0^{\xi_k \vartheta_{a,b}^{\varepsilon_0}(\sigma)}h(\tau)\, d\tau\right)\, dv_g}{\Vert \xi_k \vartheta_{a,b}^{\varepsilon_0}\Vert^2} = \\
= \frac{\displaystyle\int_{A_{\varepsilon_0 a}^b} \alpha(\sigma)\left(\int_0^{\xi_k }h(\tau)\, d\tau\right)\, dv_g}{\xi_k^2\Vert  \vartheta_{a,b}^{\varepsilon_0}\Vert^2}
+\frac{\displaystyle\int_{A_a^b \setminus A_{\varepsilon_0 a}^b} \alpha(\sigma)\left(\int_0^{\xi_k \vartheta_{a,b}^{\varepsilon_0}(\sigma)}h(\tau)\, d\tau\right)\, dv_g}{\xi_k^2\Vert  \vartheta_{a,b}^{\varepsilon_0}\Vert^2} \\
 \geq \alpha_0 \frac{\displaystyle\int_{A_{\varepsilon_0 a}^b} \left(\int_0^{\xi_k }h(\tau)\, d\tau\right)\, dv_g}{\xi_k^2\Vert  \vartheta_{a,b}^{\varepsilon_0}\Vert^2}
+\alpha_0 \frac{\displaystyle\int_{A_a^b \setminus A_{\varepsilon_0 a}^b} \left(\inf_{t \in \left[0,\xi_k \right]}\int_0^{t}h(\tau)\, d\tau\right)\, dv_g}{\xi_k^2\Vert  \vartheta_{a,b}^{\varepsilon_0}\Vert^2} \\
\!\! \!\! \!\! \geq \alpha_0 \frac{\displaystyle\int_{A_{\varepsilon_0 a}^b} \left(\displaystyle\int_0^{\xi_k }h(\tau)\, d\tau\right)\, dv_g}{\xi_k^2\Vert  \vartheta_{a,b}^{\varepsilon_0}\Vert^2}
-K_1\alpha_0 \frac{\displaystyle\int_{A_a^b \setminus A_{\varepsilon_0 a}^b} \left(\int_0^{\xi_k}h(\tau)\, d\tau\right)\, dv_g}{\xi_k^2\Vert  \vartheta_{a,b}^{\varepsilon_0}\Vert^2} \\
= \alpha_0\frac{\operatorname{Vol}_g(A_{\varepsilon_0a}^b)}{(K_1+1)\Vert  \vartheta_{a,b}^{\varepsilon_0}\Vert^2} \frac{\displaystyle\int_0^{\xi_k}h(\tau)\, d\tau}{\xi_k^2} > \frac{1}{2\lambda}
\end{multline*}
for all $k \geq k_0$.
Now, from the definition of $\xi_k \vartheta_{a,b}^{\varepsilon}$ it is clear that $\xi_k \vartheta_{a,b}^{\varepsilon_0} \in \mathbb{E}_j^{G}$. Hence $J_{G,\lambda}(\xi_k \vartheta_{a,b}^{\varepsilon_0}) <0$ and as a consequence of that $\iota_j^{G} <0$ as desired.

\textbf{Step 6:} the function $u_j^{G}$ is a local minimum for the functional $J_{G,\lambda}$ in the Sobolev space $H^1_g(\cM)$ for all $j \in \N$.

Since $\Vert u_j^{G}\Vert_{L^\infty(\cM)} \to 0$ as $j \to \infty$, up to relabel the indexes we can assume the existence of a sequence $(u_j^{G})_j \subset H^1_g(\cM)$ such that
\begin{equation} \label{eq16}
\Vert u_j^{G}\Vert_{L^\infty(\cM)} \leq t_0.
\end{equation}
At this point, in virtue of the Principle of Symmetric Criticality of Palais (see \cite{MR547524} for details), to conclude the proof it is sufficient to show that $J_{G,\lambda}$ is invariant under the action of $G$. Consider first $\Vert \cdot \Vert$. For all $\varphi \in G$ and $w \in H^1_g(\cM)$ we have
\begin{align} \label{eq12}
\Vert \varphi \circledast w \Vert^2&= \int_\cM |\nabla_g ( \varphi \circledast w)(\sigma)|^2\, dv_g+\int_\cM |(\varphi \circledast w)(\sigma)|^2\, dv_g \notag \\
& =\int_\cM |\nabla_g (  w(\varphi^{-1} \cdot \sigma))|^2\, dv_g+\int_\cM | w(\varphi^{-1} \cdot\sigma)|^2\, dv_g  \notag \\
&= \int_\cM \langle D\varphi_{\varphi^{-1}\cdot \sigma}\nabla_g  w(\varphi^{-1} \cdot \sigma),D\varphi_{\varphi^{-1}\cdot \sigma}\nabla_g  w(\varphi^{-1} \cdot \sigma)\rangle_{\varphi^{-1} \cdot \sigma}\, dv_g \notag \\
&\quad {} +\int_\cM | w(\varphi^{-1} \cdot\sigma)|^2\, dv_g \notag \\
& = \int_\cM \langle \nabla_g  w(\varphi^{-1} \cdot \sigma),\nabla_g  w(\varphi^{-1} \cdot \sigma)\rangle_{\varphi^{-1} \cdot \sigma}\, dv_g+\int_\cM | w(\varphi^{-1} \cdot\sigma)|^2\, dv_g \\
&=\int_\cM \langle \nabla_g  w( \tilde{\sigma}),\nabla_g  w(\tilde{\sigma})\rangle_{\tilde{\sigma}}\, dv_{(\varphi^{-1})^*g}+\int_\cM | w(\tilde{\sigma})|^2\, dv_{(\varphi^{-1})^*g} \notag \\
&= \int_\cM \langle \nabla_g  w( \tilde{\sigma}),\nabla_g  w(\tilde{\sigma})\rangle_{\tilde{\sigma}}\, dv_g+\int_\cM | w(\tilde{\sigma})|^2\, dv_g= \Vert w \Vert^2 \notag
\end{align}
since $\varphi$ is an  isometry and preserves scalar products. Furthermore
\begin{displaymath}
\alpha(\varphi^{-1} \cdot \sigma)=\alpha(d_g(\sigma_0,\varphi^{-1}\sigma))= \alpha(d_g(\varphi^{-1} \cdot\sigma_0,\varphi^{-1}\sigma)) =\alpha (d_g(\sigma_0,\sigma))= \alpha(\sigma)
\end{displaymath}
which implies
\begin{align} \label{eq13}
\int_\cM \alpha(\sigma)\left(\int_0^{w(\varphi^{-1}\cdot \sigma)}h(\tau)\, d\tau\right)\, dv_g &= \int_\cM \alpha(\varphi^{-1}\cdot\sigma)\left(\int_0^{w(\varphi^{-1}\cdot \sigma)}h(\tau)\, d\tau\right)\, dv_g \notag\\
&= \int_\cM \alpha(\tilde{\sigma})\left(\int_0^{w(\tilde{ \sigma})}h(\tau)\, d\tau\right)\, dv_{(\varphi^{-1})^*g} \\
&=\int_\cM \alpha(\tilde{\sigma})\left(\int_0^{w(\tilde{ \sigma})}h(\tau)\, d\tau\right)\, dv_g \notag.
\end{align}
Putting together \eqref{eq12} and \eqref{eq13} we obtain
\begin{displaymath}
J_{G,\lambda}(\varphi \circledast w)=J_{G,\lambda}(w)
\end{displaymath}
hence, applying the Principle of Symmetric Criticality of Palais we have that each element of the sequence $u_j^{G}$ is a critical point of the functional $J_{G,\lambda}$ and a weak solution of \eqref{eqP2}. Furthermore, recalling Step 2 and \eqref{eq16} we also have that $u_j^{G}$ is a solution of our original problem \eqref{eqP}.
\end{proof}

\begin{example}
	Let $f \colon \mathbb{R} \to \mathbb{R}$ be defined by
\begin{displaymath}
f(t) :=
	\begin{cases}
		9 \sqrt{t} \displaystyle\sin \left( \frac{1}{\sqrt[3]{t}} \right) - 2 \sqrt[6]{t} \cos  \left( \frac{1}{\sqrt[3]{t}} \right) &\hbox{if $t \geq 0$} \\
0 &\hbox{if $t < 0$},
	\end{cases}
\end{displaymath}
whose primitive is
\begin{displaymath}
F(t) = \int_0^t f(s) \, ds =
	\begin{cases}
		\displaystyle 6 t^{3/2} \sin \left( \frac{1}{\sqrt[3]{t}} \right)  &\hbox{if $t \geq 0$} \\
0 &\hbox{if $t < 0$}.
	\end{cases}
\end{displaymath}
As in  \cite{MR1912413} one can check that conditions $(f_0)$ and $(f_1)$ are satisfied.
\end{example}

\section{Oscillations at infinity}
In this section we investigate the solutions of problem \eqref{eqP}
\begin{equation*} \label{eqPinf}
\begin{cases}  \displaystyle -\Delta_g w+w=\lambda \alpha(\sigma) f(w) &\hbox{in}\ \cM \\
  w \in H^1_g(\cM)
\end{cases}
\end{equation*}
where $f \colon \R \to \R$ is a continuous nonnegative function that oscillates at infinity. Preferring a variational approach, we define the energy functional $J_\lambda \colon H^1_g(\cM) \to \R$ associated to problem \eqref{eqP} where
\begin{displaymath}
J_\lambda(w):=\frac{1}{2}\Vert w \Vert^2-\lambda \int_\cM \alpha(\sigma)F(w(\sigma))\, dv_g,
\end{displaymath}
and $F(t):=\displaystyle\int_0^t f(\tau)\, d\tau$. As regard the right hand side of \eqref{eqP}, we make  on the nonlinear term $f$ the hypothesis $(f_0')$ and $(f_2')$ of Theorem \ref{th2} till the end of the section. As we already did in the previous section we will fist look for solutions for a truncated problem and then we will show that they also solves \eqref{eqP}. In order to do that, we start defining the function
\begin{displaymath}
h(t):=
\begin{cases}
f(t) & \mbox{if $t \geq 0$}  \\
f(0) & \mbox{if $t<0$}
\end{cases}
\end{displaymath}
and considering the auxiliary problem
\begin{equation} \label{eqP3}
\tag{$P_\infty$}
\begin{cases}  \displaystyle -\Delta_g w+w=\lambda \alpha(\sigma) h(w) &\hbox{in}\ \cM \\
  w \in H^1_{G}(\cM).
\end{cases}
\end{equation}
We associate to problem \eqref{eqP3} the functional
\begin{displaymath}
J_{G,\lambda}(w):= \frac{1}{2}\Vert w \Vert^2-\lambda \int_\cM \alpha(\sigma)\left(\int_0^{w(\sigma)}h(\tau)\, d\tau\right)\, dv_g
\end{displaymath}
and we point out that $J_{G,\lambda} \in C^1(H^1_{G}(\cM),\R)$ and again thanks to Lemma \ref{lemma1} that is sequentially lower semicontinuous. We emphasize that nonnegative critical points of $J_{G,\lambda}(w)$ are also critical point for the functional $J_\lambda$.

\begin{proof}[Proof of Theorem \ref{th2}]
Since some arguments of the proof are very similar the the ones described in Theorem \ref{th1} we will omit them. Fix $\lambda>0$.  We start for every $j \in \N$  setting
\begin{displaymath}
\mathbb{E}_j^{G}:= \left\lbrace w \in H^1_{G}(\cM) \ : \ 0 \leq w(\sigma) \leq t_j' \ \mbox{a.e in} \ \cM \right\rbrace.
\end{displaymath}
\textbf{Step 1:} the functional $J_{G,\lambda}$ in bounded from below on $\mathbb{E}_j^{G}$ and attains its infimum on $\mathbb{E}_j^{G}$ at a function $w_j^{G} \in \mathbb{E}_j^{G}$.

From hypothesis $(f_0')$ we obtain
\begin{displaymath}
\int_0^{w(\sigma)}h(\tau)\, d\tau \leq K_2 \left( t_j' + \frac{\left(t_j'\right)^{q+1}}{q+1} \right)
\end{displaymath}
As a consequence of that
\begin{displaymath}
J_{G,\lambda}(w) \geq - \lambda K_2 \Vert \alpha \Vert_{L^1(\cM)} \left( t_j' + \frac{\left(t_j'\right)^{q+1}}{q+1} \right)
\end{displaymath}
which imply that $J_{G,\lambda}$ is bounded from below on $\mathbb{E}_j^{G}$ for every $j\in \N$. At this point, follows the line of Step 1 in Theorem \ref{th1} we can find $u_j^{G}$ such that
\begin{displaymath}
\iota_j^{G}:=\inf_{w \in \mathbb{E}_j^{G}} J_{G,\lambda}(w)=J_{G,\lambda}(u_j^{G}).
\end{displaymath}
\textbf{Step 2:} for all $j \in \N$ one has that $0 \leq u_j^{G}(\sigma) \leq t_j$ a.e. in $\cM$.

The statement follows following closely the line of the proof of Step 2 on Theorem \ref{th1}.

\textbf{Step 3:} the function $u_j^{G}$ is a local minimum for $J_{G,\lambda}$ in the Sobolev space $H^1_{G}(\cM)$ for all $j \in \N$

In order to show that, we choose $ w \in H^1_{G}(\cM)$ and we set
\begin{displaymath}
Z_j^{G}:=\left\lbrace \sigma \in \cM \ : \ w(\sigma) \notin \left[0,t_j \right] \right\rbrace
\end{displaymath}
for every $j \in \N$. Recalling the superposition operator defined in step 2 of Theorem \ref{th1} we set
\begin{displaymath}
v_j^\star(\sigma):=T_j w(\sigma)=
\begin{cases}
t_j & \mbox{if $w(\sigma) > t_j$} \\
w(\sigma) & \mbox{if $0 \leq w(\sigma) \leq t_j$} \\
0 & \mbox{if $w(\sigma) < 0$}.
\end{cases}
\end{displaymath}
Now, on one hand one can easily see that
\begin{displaymath}
\int_{v_j^\star(\sigma)}^{w(\sigma)}h(\tau)\, d\tau=0
\end{displaymath}
for every $\sigma \in \cM \setminus Z_j^{G}$. On the other hand, if $ \sigma \in Z_j^{G}$ we analyse the situation according to the three different possible alternatives.
\begin{enumerate}
\item If $w(\sigma) \leq 0$ it is immediate to see
\begin{displaymath}
\int_{v_j^\star(\sigma)}^{w(\sigma)}h(\tau)\, d\tau=\int_{0}^{w(\sigma)}f(0)\,d\tau\leq 0.
\end{displaymath}
\item If $t_j<w(\sigma)\leq t_j'$ we can show
\begin{align*}
\int_{v_j^\star(\sigma)}^{w(\sigma)}h(\tau)\,d\tau  \leq 0.
\end{align*}
arguing similarly to Step 3 in Theorem \ref{th1}.
\item If $w(\sigma)>t_j'$ we obtain
\begin{align} \label{eq14}
\int_{v_j^\star(\sigma)}^{w(\sigma)}h(\tau)\,d\tau & =\int_{t_j}^{w(\sigma)}h(\tau)\,d\tau \\ \notag
& \leq \left| \int_{t_j}^{w(\sigma)}h(\tau)\,d\tau \right| \leq K_2 \left[ (w(\sigma)-t_j)+\frac{1}{q+1}(w(\sigma)^{q+1}-t_j^{q+1}) \right]
\end{align}
At this point set
\begin{displaymath}
\tilde{C}:=K_2\Vert \alpha \Vert_{L^{\infty}(\cM)} \sup_{t \geq t_j'}\frac{(q+1)(t-t_j)+(t^{q+1}-t_j^{q+1})}{(t-t_j)^{q+1}}.
\end{displaymath}
From this and \eqref{eq14} we have
\begin{align} \label{eq15}
\int_\cM \alpha(\sigma) \left(\int_{v_j^\star(\sigma)}^{w(\sigma)}h(\tau)\,d\tau \right) \, dv_g & \leq \Vert \alpha \Vert_{L^{\infty}(\cM)} \int_\cM \left(\int_{v_j^\star(\sigma)}^{w(\sigma)}h(\tau)\, d\tau \right) \, dv_g \\\nonumber
& \leq \tilde{C} \int_\cM (w(\sigma)-t_j)^{q+1}\, dv_g.
\end{align}
Denote
\begin{displaymath}
\tilde{\gamma}:= \sup_{w \in H^1_{G}(\cM)\setminus \left\lbrace 0 \right\rbrace} \frac{\Vert w \Vert_{L^{q+1}(\cM)}}{\Vert w \Vert}
\end{displaymath}
and observe that is finite for Lemma \ref{lemma1}. From \eqref{eq15} we deduce
\begin{equation}
\int_\cM \alpha(\sigma) \left(\int_{v_j^\star(\sigma)}^{w(\sigma)}h(\tau)\,d\tau \right) \, dv_g \leq \tilde{C} \tilde{\gamma}^{q+1} \Vert w-v_j^\star\Vert^{q+1}.
\end{equation}
\end{enumerate}
At this point the conclusion is achieved as in Step 3 of Theorem \ref{th1}.

\textbf{Step 4} We have that
\begin{displaymath}
\liminf_{j \to \infty} \iota_j^{G} =-\infty
\end{displaymath}
Replacing $(f_1)$ with $(f_2')$ and repeating the calculations done in Step 5 of Theorem \ref{th1} we can find a constant $\tilde{\kappa}>0$ and a divergent sequence $(\xi_k)_k$ such that
\begin{displaymath}
J_{G,\lambda}(\xi_k \vartheta_{a,b}^{\varepsilon_0}) < -\tilde{\kappa}\Vert \xi_k \vartheta_{a,b}^{\varepsilon_0}\Vert^2
\end{displaymath}
for $k \geq k_0$ (see the proof of Theorem \ref{th1} for the definition of $\vartheta_{a,b}^{\varepsilon}$). At this point, we notice that we can find a subsequence $(t_{j_k'})_k$ so that $t_{j_k'} \geq \xi_k$ and $\xi_k \vartheta_{a,b}^{\varepsilon_0} \in \mathbb{E}_{j_k}^{G}$. Then
\begin{displaymath}
\lim_{k \to \infty} \iota_{j_k}^{G} \leq  \lim_{k \to \infty }J_{G,\lambda}(\xi_k \vartheta_{a,b}^{\varepsilon_0}) <-\lim_{k \to \infty} \tilde{\kappa}\Vert \xi_k \vartheta_{a,b}^{\varepsilon_0}\Vert^2=-\infty.
\end{displaymath}
From this, we can conclude using the definition of inferior limit getting
\begin{displaymath}
\liminf_{j \to \infty} \iota_{j}^{G}=-\infty
\end{displaymath}
In order to conclude the proof it sufficient to argue as in Step 6 of Theorem \ref{th1} proving that $J_{G,\lambda}$ is invariant under the action of the group $G$ and applying the Principle of Symmetric Criticality of Palais.
\end{proof}
To conclude we exhibit an example of a nonlinearity that satisfies hypothesis $(f_0')$-$(f_2')$.

\begin{example}
 Consider the function
\begin{displaymath}
f(t):=
\begin{cases}
\displaystyle\frac{2 (d-1)  }{d-2} t^{{\frac{ d}{d-2}}}\sin \left(\sqrt[3]{t}\right)+\frac{1}{3}
   t^{\frac{2(2d-1)}{3(d-2)}} \cos \left(\sqrt[3]{t}\right) & \mbox{if $t \geq 0$}  \\
0 & \mbox{if $t<0$}
\end{cases}
\end{displaymath}
whose primitive is
\begin{displaymath}
F(t)=
\begin{cases}
t^{2\frac{d-1}{d-2}} \sin \left(\sqrt[3]{t} \right) & t \geq 0  \\
0 & t<0.
\end{cases}
\end{displaymath}
Hypothesis $(f_0')$ is trivially satisfied since the trigonometric functions are bounded and
\[
\frac{d}{d-2}<2^*-1 \quad \quad \mbox{and} \quad \quad \frac{2(2d-1)}{3(d-2)}<2^*-1.
\]
In order to see the validity of $(f_1')$ one can choose for instance
\[
t_j:= \left[\frac{\pi}{2}(1+4j)\right]^3 \quad \quad \mbox{and} \quad \quad t_j':=\left[\frac{\pi}{2}(3+4j)\right]^3.
\]
It is easy to check that $F$ is decreasing in the interval $[t_j,t'_j]$, hence
\[
	F(t_j) = \sup_{t \in [t_j,t'_j]} F(t).
\]
To prove that $f$ satisfies $(f'_2)$, we choose $\xi_j = t_j \to +\infty$, so that
\[
	\lim_{j \to +\infty} \frac{F(\xi_j)}{\xi_j^2} = \lim_{j \to +\infty} \frac{\xi_j^{2\frac{d-1}{d-2}}}{\xi_j^2} = \lim_{j \to +\infty} \xi_j ^{\frac{2}{d-2}} = +\infty.
\]
Moreover,
\[
	\inf_{t \in [0,\xi_j]} F(t) = F \left( t_{j-1}' \right) = - \left( t'_{j-1} \right)^{2 \frac{d-1}{d-2}} \geq - \left( \xi_{j} \right)^{2 \frac{d-1}{d-2}} = - F ( \xi_j ),
\]
which shows that $(f'_2)$ is verified with $K_3 = 1$.
\end{example}

\bibliographystyle{amsplain}
\bibliography{Oscillatingnonlinearities}
\end{document}